\newtheorem{theorem}[subsection]{Theorem}
\newtheorem{proposition-definition}[subsection]
{Proposition-Definition}
\newtheorem{corollary}[subsection]{Corollary}
\newtheorem{lemma}[subsection]{Lemma}
\theoremstyle{definition}
\newcommand{\scr}[1]{\mathbf{\EuScript{#1}}}
\newcommand\cB{\mathcal{B}}
\newcommand\cL{\mathcal{L}}
\newcommand\cM{\mathcal{M}}
\begin{document}

\title{The Picard groups of the stacks $\scr Y_0(2)$ and $\scr Y_0(3)$}
\subjclass[2010]{Primary 14D22. Secondary 14D23, 14D05, 14H52.}
\keywords{Stacks, elliptic curves, Picard group, moduli spaces}
\author[A.~Niles]{Andrew Niles}
\address{Department of Mathematics \& Computer Science\\
College of the Holy Cross\\
1 College Street\\
Worcester, MA 01610\\
USA}
\email{aniles@holycross.edu}
\date{\today}

\begin{abstract}
We compute the Picard group of the stack of elliptic curves equipped with a cyclic subgroup of order two, and of the stack of elliptic curves equipped with a cyclic subgroup of order three, over any base scheme on which 6 is invertible. This generalizes a result of Fulton-Olsson, who computed the Picard group of the stack of elliptic curves (with no level structure) over a wide variety of base schemes.
\end{abstract}

\maketitle

\section{Introduction}

Modular curves and their stack-theoretic counterparts appear widely in modern number theory and arithmetic geometry, yet many basic questions about these stacks remain unaddressed or only recently addressed in the literature. For example, the complete moduli stacks $\scr X(N)$, $\scr X_0(N)$ and $\scr X_1(N)$ of generalized elliptic curves equipped with Drinfeld level structure were not shown to be algebraic stacks over $\mathrm{Spec}(\mathbb{Z})$ until this was proved in the paper \cite{C}, though partial (in fact, nearly complete) results in this direction were given in \cite{DR} and \cite{KM1}. 

Another property of these stacks that has not been fully considered is their Picard groups. For example, consider the stack $\cM_{1,1}$. Its Picard group, over an algebraically closed field of characteristic $\neq 2,3$, was computed by Mumford in \cite{M}. However, its Picard group over more general base schemes does not seem to appear in the literature until the relatively recent paper \cite{FO}. 

In this note we consider the analogous question for the stack $\scr Y_0(N)$ classifying elliptic curves equipped with a cyclic order-$N$ subgroup, for $N = 2$ and $N = 3$ (note that $\scr Y_0(2)$ equals the stack $\scr Y_1(2)$ classifying elliptic curves equipped with a point of exact order $2$). These stacks share some key characteristics with $\cM_{1,1}$: over $\mathbb{Z}[1/6]$, their generic points have stabilizer group $\mu_2$, some of their points have larger automorphism groups $\mu_4$ or $\mu_6$, and their coarse spaces are genus-$0$ modular curves.

As with all modular curves, the Picard group of the coarse space $Y_0(N)$ is well-understood, but that of the stack has not been previously addressed. Specifically, the main result of \cite{FO} is the construction of a canonical isomorphism 
\begin{displaymath}
\mathbb{Z}/(12) \times \mathrm{Pic}(M_{1,1,S}) \rightarrow \mathrm{Pic}(\cM_{1,1,S}),
\end{displaymath}
where $S$ is any scheme satisfying either (1) $2$ is invertible on $S$ or (2) $S$ is reduced, and where $M_{1,1,S} \cong \mathbb{A}^1_S$ is the coarse moduli space of $\cM_{1,1,S}$. We prove similar results for $\scr Y_0(2)_S$ and $\scr Y_0(3)_S$, though necessarily over a more restrictive class of base scheme:

\begin{theorem}
Let $S$ be a scheme on which $6$ is invertible, and let $\scr Y_0(2)_S$ (resp.~$\scr Y_0(3)_S$) be the Deligne-Mumford stack over $S$ whose objects over an $S$-scheme $T$ are pairs $(E,G)$, where $E/T$ is an elliptic curve and $G \subset E(T)$ is a cyclic subgroup of order $2$ (resp.~order $3$). Let $Y_0(2)_S$ (resp.~$Y_0(3)_S$) be the coarse moduli space of $\scr Y_0(2)_S$ (resp.~$\scr Y_0(3)_S$). Then there are canonical isomorphisms:
\begin{align*}
\mathbb{Z}/(4) \times \mathrm{Pic}(Y_0(2)_S) & \rightarrow \mathrm{Pic}(\scr Y_0(2)_S) \\
\mathbb{Z}/(6) \times \mathrm{Pic}(Y_0(3)_S) & \rightarrow \mathrm{Pic}(\scr Y_0(3)_S)
\end{align*}
\end{theorem}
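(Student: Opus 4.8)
The plan is to mirror the Fulton--Olsson computation by realizing each stack as a quotient $[U/\mathbb{G}_m]$ and computing its $\mathbb{G}_m$-equivariant Picard group. First I would produce explicit Weierstrass presentations, using crucially that $6$ is invertible on $S$. For $\scr Y_0(2)_S = \scr Y_1(2)_S$, every pair $(E,P)$ with $P$ of exact order $2$ is \'etale-locally isomorphic to $y^2 = x(x^2+ax+b)$ with $P = (0,0)$, smoothness amounting to $b(a^2-4b) \in \mathcal{O}^\times$; the only isomorphisms fixing this form are the rescalings $x \mapsto u^2 x,\ y \mapsto u^3 y$, acting on $(a,b)$ with weights $(2,4)$ and with kernel $\mu_2 = \{\pm 1\}$. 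This should give $\scr Y_0(2)_S \cong [U_2/\mathbb{G}_m]$ with $U_2 = \{b(a^2-4b) \neq 0\} \subset \mathbb{A}^2_S$. For $N=3$ I would first treat $\scr Y_1(3)_S$ via the Deuring form $y^2 + a_1 xy + a_3 y = x^3$ with the order-$3$ point at $(0,0)$, whose discriminant is $a_3^3(a_1^3 - 27 a_3)$ and whose rescalings act with weights $(1,3)$, yielding $\scr Y_1(3)_S \cong [U_3/\mathbb{G}_m]$ with $U_3 = \{a_3(a_1^3 - 27a_3) \neq 0\}$.

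The conceptual heart of the $N=3$ case is the passage from marking a generator to marking the subgroup. Since $-1 \in \mathrm{Aut}(E)$ negates any order-$3$ point but preserves the subgroup it generates, $\scr Y_0(3)_S$ acquires a generic $\mu_2$ that $\scr Y_1(3)_S$ lacks. I would make this precise by showing that the rigidification of $\scr Y_0(3)_S$ along this central $\mu_2$ is canonically $\scr Y_1(3)_S$, and that the forgetful morphism $\scr Y_1(3)_S \to \scr Y_0(3)_S$, $(E,P) \mapsto (E, \langle P \rangle)$, furnishes a section of the resulting $\mu_2$-gerbe. A $\mu_2$-gerbe banded by $\mu_2$ with a section is neutral, so this would identify $\scr Y_0(3)_S \cong \scr Y_1(3)_S \times B\mu_2$. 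Equivalently, and more usefully for computation, this says $\scr Y_0(3)_S \cong [U_3/\mathbb{G}_m]$ with the $\mathbb{G}_m$-action reparametrized through squaring, i.e.\ with weights $(2,6)$ and kernel $\mu_2$.

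Next I would compute $\mathrm{Pic}([U/\mathbb{G}_m]) = \mathrm{Pic}^{\mathbb{G}_m}(U)$ in each case. Because $\mathbb{A}^2_S$ contracts $\mathbb{G}_m$-equivariantly to the origin under a positive-weight action, $\mathrm{Pic}^{\mathbb{G}_m}(\mathbb{A}^2_S) \cong \mathbb{Z} \times \mathrm{Pic}(S)$, the $\mathbb{Z}$ recording the character weight. Since each $U$ is obtained by inverting the defining functions of the complementary divisors, $\mathrm{Pic}^{\mathbb{G}_m}(U)$ is this group modulo the classes of those functions, namely their weights. For $\scr Y_0(2)_S$ we invert $b$ and $a^2-4b$, both of weight $4$, killing $4\mathbb{Z}$ and giving $\mathbb{Z}/(4) \times \mathrm{Pic}(S)$; for $\scr Y_0(3)_S$ (weights $(2,6)$) we invert $a_3$ and $a_1^3 - 27a_3$, both of weight $6$, giving $\mathbb{Z}/(6) \times \mathrm{Pic}(S)$. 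I would then identify the factors with the terms in the theorem: the coarse space is $\mathbb{G}_{m,S}$ in both cases (the invariant $a^2/b$, resp.\ $a_1^3/a_3$, omitting one value), and pullback along the coarse map identifies $\mathrm{Pic}(Y_0(N)_S)$ with the complementary summand, while the torsion factor is generated by the Hodge bundle $\lambda$ (the weight-$1$ character), of order exactly $4$ resp.\ $6$ since $-1$ acts on the invariant differential $dx/y$ by a sign. This also exhibits the isomorphisms as canonical and split.

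The main obstacle I expect is not the Picard bookkeeping, which is routine once the presentations are in hand, but the rigorous verification of the global quotient presentations over a base on which only $6$ is invertible: proving that the Weierstrass normal forms are \'etale-locally universal and that the isomorphisms are exactly the asserted rescalings, so that one recovers the stack with its automorphisms rather than merely the coarse space. Within this, the delicate point is the $N=3$ identification $\scr Y_0(3)_S \cong \scr Y_1(3)_S \times B\mu_2$, i.e.\ correctly locating the extra $\mu_2$ and verifying that the forgetful morphism splits the gerbe; this is what accounts for $\mathbb{Z}/(6) = \mathbb{Z}/(2) \times \mathbb{Z}/(3)$ being a genuine product, in contrast to the cyclic $\mathbb{Z}/(4)$ arising for $N=2$, where the $\mu_2$ is intrinsic to the level structure and no such splitting occurs.
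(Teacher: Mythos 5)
Your proposal is essentially correct in outline but follows a genuinely different route from the paper. The paper never writes down a global quotient presentation: it works directly on the stack, (i) producing an explicit trivialization of $\lambda^{\otimes 4}$ (resp.\ $\lambda^{\otimes 6}$) from a unit canonically attached to the level structure --- the factor $D=g_2-3x_0^2$ of $\Delta=(g_2-3x_0^2)(g_2-12x_0^2)^2$ for $N=2$, and $y_0^2$ for $N=3$; (ii) classifying stabilizers to show $p$ is a $\mu_2$-gerbe away from one point with $\mu_4$ (resp.\ $\mu_6$); (iii) restricting to the residual gerbe $\cB\mu_4$ (resp.\ $\cB\mu_6$) at that point to get the character map to $\mathbb{Z}/(4)$ (resp.\ $\mathbb{Z}/(6)$), identifying its kernel with $p^\ast\mathrm{Pic}(Y_0(N)_S)$ via the Fulton--Olsson descent criterion, and splitting the sequence with $\lambda$. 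Your route via $[U/\mathbb{G}_m]$ and equivariant Picard groups is the classical Mumford-style alternative; it buys explicit presentations and makes the order of $\lambda$ transparent, and your analysis of the $N=3$ case (the rigidification $\scr Y_1(3)\to\scr Y_0(3)$, the section furnished by the forgetful map, and the resulting splitting $\scr Y_0(3)\cong\scr Y_1(3)\times B\mu_2$) is correct and arguably more structural than anything in the paper --- though you should note that your two descriptions of $\scr Y_0(3)$ are not tautologically equivalent: $[U_3/\mathbb{G}_m^{(2,6)}]\to[U_3/\mathbb{G}_m^{(1,3)}]$ is the gerbe of square roots of the torsor $U_3$, which is trivial here only because $\lambda=\lambda^{\otimes 4}$ admits the square root $\lambda^{\otimes 2}$ once $\lambda^{\otimes 3}\cong\cO$ on $\scr Y_1(3)$.

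The one genuine gap is in the Picard bookkeeping over an arbitrary base. The theorem is asserted for every $S$ on which $6$ is invertible, with complementary factor $\mathrm{Pic}(Y_0(N)_S)$, and $\mathrm{Pic}(Y_0(N)_S)=\mathrm{Pic}(\mathbb{G}_{m,S})$ need not equal $\mathrm{Pic}(S)$ when $S$ is not seminormal. Your two key steps --- homotopy invariance $\mathrm{Pic}^{\mathbb{G}_m}(\mathbb{A}^2_S)\cong\mathbb{Z}\times\mathrm{Pic}(S)$ and the excision claim that restriction to $U$ is surjective with kernel generated by the weights of the inverted functions --- both require regularity or seminormality hypotheses on $S$ that are not assumed, and for such $S$ your computed answer $\mathbb{Z}/(4)\times\mathrm{Pic}(S)$ would be strictly smaller than the correct one. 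This is precisely the difficulty the paper sidesteps by invoking \cite[2.3]{FO}, which identifies the kernel of the character map with $p^\ast\mathrm{Pic}(Y_0(N)_S)$ directly, whatever that group is. To make your argument deliver the theorem in full generality you would either need to restrict $S$, or replace the excision step by the same descent argument the paper uses.
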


\subsection*{Conventions} Over a scheme $S$, $\scr Y_0(N)_S$ is the category fibered in groupoids over $S$, whose objects over an $S$-scheme $T$ are pairs $(E,G)$, where $E/T$ is an elliptic curve and $G$ is a $[\Gamma_0(N)]$-structure on $E$. To work in arbitrary characteristics $G$ must be taken to be a Drinfeld structure (see \cite{KM1} or \cite{C}); however, our results are specific to $N = 2,3$ and only hold over schemes on which $6$ is invertible, so $G$ may simply be viewed as a cyclic, order-$N$ subgroup of $E(T)$. Over any scheme $S$ on which $N$ is invertible, $\scr Y_0(N)_S$ is a Deligne-Mumford stack (see \cite[\S4.2]{DR}).

\section{Picard group of $\scr Y_0(2)$}

Let $S$ be a scheme on which $6$ is invertible, and consider the stack $\scr Y_0(2)_S$. Let 
\begin{displaymath}
f: (\scr E, G) \rightarrow \scr Y_0(2)_S 
\end{displaymath}
be the universal elliptic curve with $[\Gamma_0(2)]$-structure, and let 
\begin{displaymath}
\lambda := f_\ast \Omega^1_{\scr E/\scr Y_0(2)_S} 
\end{displaymath}
be the Hodge bundle on $\scr Y_0(2)_S$. 

An object of $\scr Y_0(2)_S$ over an affine $S$-scheme $T = \mathrm{Spec}(R)$ consists of a pair $(E,G)$, where $E/T$ is an elliptic curve and $G \subseteq E(T)$ is a $[\Gamma_0(2)]$-structure on $E$, i.e.~a cyclic subgroup of $E(T)$ of order $2$, say $G = \langle P \rangle$ where $P \in E(T)$ has exact order $2$. Write 
\begin{displaymath}
t: T \rightarrow \scr Y_0(2)_S 
\end{displaymath}
for the morphism corresponding to $(E,G)$. 

Suppose the elliptic curve $E$ is given by an equation
\begin{displaymath}
y^2 = 4x^3 - g_2 x - g_3,
\end{displaymath}
where $g_2,g_3 \in R$. (Since $6$ is invertible, any elliptic curve may be given by such an equation locally on the base; see \cite[2.5]{D}.) Any change of coordinates may be given by $x \mapsto u^2 x$, $y \mapsto u^3 y$ for some unit $u \in R^\times$. We have the invariant differential $\omega = \frac{dx}{y}$ (a section of $t^\ast \lambda$), and under such a change of coordinates $\omega \mapsto u^{-1} \omega$. Finally, we have the discriminant $\Delta = g_2^3 - 27 g_3^2$, and under such a change of coordinates $\Delta \mapsto u^{12} \Delta$. 

Since $P \in E(T)$ has exact order $2$, in coordinates it is of the form $P = (x_0,0)$, where $x_0 \in R$ is a root of the equation $4x^3 - g_2 x - g_3 = 0$.

\begin{lemma}
Let $(E,G) \in \scr Y_0(2)_S$ as above, where $G = \langle P \rangle$ and $P$ is given in coordinates by $(x_0,0)$. Then the discriminant $\Delta = g_2^3 - 27 g_3^2$ factors as
\begin{displaymath}
\Delta = (g_2 - 3x_0^2)(g_2 - 12 x_0^2)^2.
\end{displaymath}
\end{lemma}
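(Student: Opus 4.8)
The starting point is the observation already recorded just before the statement: since $P = (x_0,0)$ has exact order $2$, the scalar $x_0 \in R$ is a root of the cubic $4x^3 - g_2 x - g_3$. Evaluating there gives the single relation $g_3 = 4x_0^3 - g_2 x_0$, which is the only input the proof needs. Once $g_3$ is eliminated, the asserted equality becomes a polynomial identity in $g_2$ and $x_0$, so it suffices to verify it after any convenient base change; in particular we may pass to a ring over which the cubic splits completely. (Since $2$, and indeed $6$, is invertible on $S$, the denominators appearing below cause no trouble.)

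The most direct route is simply to substitute $g_3 = 4x_0^3 - g_2 x_0$ into $\Delta = g_2^3 - 27 g_3^2$ and expand. One obtains
\begin{displaymath}
\Delta = g_2^3 - 27(4x_0^3 - g_2 x_0)^2 = g_2^3 - 27 g_2^2 x_0^2 + 216 g_2 x_0^4 - 432 x_0^6,
\end{displaymath}
and expanding the proposed right-hand side $(g_2 - 3x_0^2)(g_2 - 12x_0^2)^2$ yields the same polynomial. This settles the claim, but it is opaque as to why the discriminant factors in exactly this way.

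I would therefore prefer to argue conceptually by factoring out the known root. Working where the cubic splits, write $4x^3 - g_2 x - g_3 = 4(x - x_0)(x - r_1)(x - r_2)$, so that $x_0, r_1, r_2$ are the $x$-coordinates of the three $2$-torsion points. Matching coefficients (using the relation for $g_3$) identifies the complementary quadratic as $x^2 + x_0 x + (x_0^2 - g_2/4)$, whence
\begin{displaymath}
(r_1 - r_2)^2 = (r_1+r_2)^2 - 4 r_1 r_2 = g_2 - 3x_0^2, \qquad (x_0 - r_1)(x_0 - r_2) = 3x_0^2 - \tfrac{g_2}{4} = -\tfrac{1}{4}(g_2 - 12x_0^2).
\end{displaymath}
Thus both factors acquire meaning: $g_2 - 3x_0^2$ is the square of the difference of the two roots other than $x_0$, while $g_2 - 12x_0^2$ is, up to the constant $-4$, the product $(x_0 - r_1)(x_0 - r_2)$ — essentially the derivative of the cubic at its root $x_0$.

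To finish I would invoke the classical relation between the Weierstrass discriminant and the roots of the cubic, namely $\Delta = 16\,(x_0 - r_1)^2 (x_0 - r_2)^2 (r_1 - r_2)^2$, which follows by comparing $\Delta/16$ with the discriminant of the monic cubic $x^3 - (g_2/4)x - g_3/4$. Substituting the two displayed expressions gives
\begin{displaymath}
\Delta = 16 \cdot \tfrac{1}{16}(g_2 - 12x_0^2)^2 \cdot (g_2 - 3x_0^2) = (g_2 - 3x_0^2)(g_2 - 12x_0^2)^2,
\end{displaymath}
as claimed. The only genuine obstacle here is bookkeeping of the numerical constants — the factors of $4$ arising from the leading coefficient of $4x^3 - \cdots$ and the $16$ in the discriminant formula — all of which are invertible under our hypothesis, so the identity descends to hold integrally over $R$.
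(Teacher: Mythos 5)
Your proposal is correct, and its first computation is precisely the paper's proof: substitute $g_3 = 4x_0^3 - g_2 x_0$ (the paper's own display has a typo, writing $4x_0^2 - g_2 x_0$, which your version silently corrects) into $\Delta = g_2^3 - 27g_3^2$ and expand to match $(g_2-3x_0^2)(g_2-12x_0^2)^2$. The paper stops there. Your second argument is a genuinely different and more illuminating route: writing $4x^3 - g_2x - g_3 = 4(x-x_0)(x-r_1)(x-r_2)$ and using $x_0 + r_1 + r_2 = 0$, you correctly identify the complementary quadratic as $x^2 + x_0 x + (x_0^2 - g_2/4)$, whence $(r_1-r_2)^2 = g_2 - 3x_0^2$ and $(x_0-r_1)(x_0-r_2) = -\tfrac{1}{4}(g_2 - 12x_0^2)$; combined with $\Delta = 16\prod_{i<j}(e_i-e_j)^2$ this gives the factorization with all constants checking out. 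What this buys is an explanation of \emph{why} the discriminant factors this way and why one factor appears squared: the repeated factor $g_2 - 12x_0^2$ measures (up to the unit $-4$) the separation of the chosen $2$-torsion point $x_0$ from the other two roots, while the simple factor $g_2 - 3x_0^2$ is the discriminant of the complementary quadratic. Your reduction to the split case is legitimate since the identity, after eliminating $g_3$, lives in $\ZZ[g_2,x_0]$ (and in any case the two displayed quantities are symmetric in $r_1, r_2$, hence already defined over $R$). Both arguments are sound; the paper's is shorter, yours is more structural.
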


\begin{proof}
This is elementary, since $g_3 = 4x_0^2 - g_2 x_0$. We have:
\begin{align*}
\Delta & = g_2^3 - 27 g_3^2 \\
& = g_2^3 - 27 (4x_0^2 - g_2 x_0 )^2 \\
& = g_2^3 - 27 g_2^2 x_0^2 + 216 g_2 x_0^4 - 432 x_0^6 \\
& = (g_2 - 3x_0^2)(g_2 - 12 x_0^2)^2 .
\end{align*}
\end{proof}

In particular, $D := g_2 - 3x_0^2$ is a unit in $R$, since it is a factor of the unit $\Delta$. Note that under a coordinate transformation $x \mapsto u^2 x$, $y \mapsto u^3 y$ we also have $x_0 \mapsto u^2 x_0$ and $g_2 \mapsto u^4 g_2$, so $D \mapsto u^4 D$.

\begin{corollary}
$D \omega^{\otimes 4}$ is independent of the choice of coordinates.
\end{corollary}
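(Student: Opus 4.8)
The plan is to verify directly that the only permissible coordinate changes act trivially on the expression $D\omega^{\otimes 4}$, by tracking the exponent of the unit $u$ contributed by each factor. Since $6$ is invertible on the base, the discussion preceding the statement records that every change of coordinates preserving the Weierstrass form $y^2 = 4x^3 - g_2 x - g_3$ has the shape $x \mapsto u^2 x$, $y \mapsto u^3 y$ for some $u \in R^\times$; so it suffices to check invariance under a single such transformation.

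First I would recall the two transformation laws already established above. On the one hand, $\omega = dx/y$ satisfies $\omega \mapsto u^{-1}\omega$, whence $\omega^{\otimes 4} \mapsto u^{-4}\omega^{\otimes 4}$. On the other hand, the preceding remark observes that $x_0 \mapsto u^2 x_0$ and $g_2 \mapsto u^4 g_2$, so that $D = g_2 - 3x_0^2 \mapsto u^4 g_2 - 3 u^4 x_0^2 = u^4 D$. Multiplying the two laws, the powers of $u$ cancel exactly: $D\omega^{\otimes 4} \mapsto (u^4 D)(u^{-4}\omega^{\otimes 4}) = D\omega^{\otimes 4}$, which is precisely the assertion.

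There is essentially no obstacle here beyond bookkeeping: the entire content is the numerical coincidence that $D$ scales with weight $+4$ while $\omega^{\otimes 4}$ scales with weight $-4$ under the single scaling parameter $u$. The two points worth stating carefully are \emph{why} these are the only coordinate changes that need to be considered --- which rests on the normalized Weierstrass form available because $6 \in R^\times$ --- and the fact that $D$ is a genuine unit, which was supplied by the preceding lemma. Given both, the result is the immediate cancellation above. I would also remark that this invariance is exactly what is needed for $D\omega^{\otimes 4}$ to glue to a well-defined global section of $\lambda^{\otimes 4}$ on $\scr Y_0(2)_S$, which is presumably the use to which the corollary will be put.
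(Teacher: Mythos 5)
Your proof is correct and is exactly the argument the paper intends: the corollary follows immediately from the transformation laws $D \mapsto u^4 D$ and $\omega \mapsto u^{-1}\omega$ recorded just before the statement, so the weights $+4$ and $-4$ cancel. The paper leaves this as an immediate consequence without a written proof, and your bookkeeping matches it precisely.
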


Since $D \omega^{\otimes 4}$ is independent of the choice of coordinates, and since $D$ is a unit in the base, this means that for every morphism $t: T = \mathrm{Spec}(R) \rightarrow \scr Y_0(2)_S$ defining a pair $(E,G)$, where the elliptic curve $E$ is given by an equation $y^2 = 4x^3 - g_2 x - g_3$ and where $G = \langle (x_0,0) \rangle$, we have a canonical trivialization of $t^\ast \lambda^{\otimes 4}$ defined by $D \omega^{\otimes 4}$, independent of the choice of coordinates. Therefore we have shown: 
\begin{corollary}\label{lambda4triv}
$D \omega^{\otimes 4}$ defines a trivialization of the line bundle $\lambda^{\otimes 4}$ on $\scr Y_0(2)_S$.
\end{corollary}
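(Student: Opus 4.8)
The plan is to exhibit $D\omega^{\otimes 4}$ as a global, nowhere-vanishing section of $\lambda^{\otimes 4}$ over the stack, which is precisely a trivialization. Recall that such a section amounts to the functorial datum assigning, to each object $(E,G)\in\scr Y_0(2)_S(T)$, a section of the pullback $t^\ast\lambda^{\otimes 4}$ over $T$, compatibly with base change $T'\to T$ and with isomorphisms of objects. I would build this datum from local Weierstrass presentations, then verify that it glues and is nowhere vanishing.

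First I would work locally on $T$: cover $T$ by affine opens $\mathrm{Spec}(R_i)$ over which $E$ admits an equation $y^2=4x^3-g_2x-g_3$ with $G=\langle(x_0,0)\rangle$, using the local existence of such models \cite[2.5]{D} (valid since $6$ is invertible). On each such open, $\omega=dx/y$ generates $t^\ast\lambda$, hence $\omega^{\otimes 4}$ generates $t^\ast\lambda^{\otimes 4}$; since $D=g_2-3x_0^2$ is a unit, being a factor of the unit $\Delta$ by the Lemma, $D\omega^{\otimes 4}$ is a nowhere-vanishing local section.

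The substance of the argument is the gluing, which is delivered by the preceding corollary. On an overlap, the two chosen equations differ by a coordinate change $x\mapsto u^2x$, $y\mapsto u^3y$ with $u\in R^\times$, under which $\omega\mapsto u^{-1}\omega$ and $D\mapsto u^4D$, so $D\omega^{\otimes 4}$ is unchanged; hence the local sections agree and glue to a global section over $T$. Crucially, the same computation shows invariance under every automorphism of $(E,G)$, since each such automorphism is a coordinate change of this form (the generic $\mu_2$, and the $\mu_4$ or $\mu_6$ at special points), so no automorphism obstructs descent to the stack. As $g_2$, $x_0$, $D$, and $\omega$ are all formed compatibly with pullback, the assignment is functorial in $(E,G)$, and being locally a unit times the generator $\omega^{\otimes 4}$ it is nowhere vanishing; this yields the desired trivialization of $\lambda^{\otimes 4}$.

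I expect the main point to be conceptual rather than computational: recognizing that the coordinate-independence established above is exactly the compatibility needed for the local trivializations to descend to the stack, including invariance under all the automorphism groups appearing on $\scr Y_0(2)_S$. It is worth stressing that the twisting factor $D$ is essential here — $\omega^{\otimes 4}$ alone transforms by $u^{-4}$ and gives no global section — so the entire construction hinges on $D$ transforming oppositely, by $u^{4}$.
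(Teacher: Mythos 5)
Your proposal is correct and follows essentially the same route as the paper: the paper deduces the corollary directly from the coordinate-independence of $D\omega^{\otimes 4}$ (the transformation laws $D\mapsto u^4D$, $\omega\mapsto u^{-1}\omega$) together with the fact that $D$ is a unit as a factor of $\Delta$. Your write-up merely makes explicit the gluing, functoriality, and automorphism-invariance that the paper leaves implicit in the sentence preceding the corollary.
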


We are now ready to prove the main result of this section. Let $S$ be a scheme on which $6$ is invertible, and consider the stack $\scr Y_0(2)_S$. Let $p: \scr Y_0(2)_S \rightarrow Y_0(2)_S$ be the coarse space morphism. 
\begin{theorem}
Under the above assumptions, the homomorphism 
\begin{align*}
\mathbb{Z}/(4) \times \mathrm{Pic}(Y_0(2)_S) & \rightarrow \mathrm{Pic}( \scr Y_0(2)_S ) \\
(n, \cL) & \mapsto \lambda^{\otimes n} \otimes p^\ast \cL 
\end{align*}
is an isomorphism.
\end{theorem}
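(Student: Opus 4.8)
The plan is to prove that the displayed homomorphism is both injective and surjective, resting on three structural facts about $\scr Y_0(2)_S$ when $6$ is invertible: it is a \emph{tame} Deligne--Mumford stack; its generic automorphism group scheme is $\mu_2$, acting on $\omega = dx/y$ (hence on $\lambda$) through the standard character $u \mapsto u^{-1}$; and it has exactly one stacky point $P_0$ with larger automorphism group, namely the pair $(E,G)$ with $E : y^2 = 4x^3 - g_2 x$ (so $j = 1728$) and $G = \langle (0,0) \rangle$, whose stabilizer is $\mu_4$, with no point having automorphism group $\mu_6$. That $\lambda^{\otimes 4}$ is trivial, so the map is well defined on $\mathbb{Z}/(4)$, is exactly Corollary~\ref{lambda4triv}. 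Tameness gives $p_\ast \mathcal{O}_{\scr Y_0(2)_S} = \mathcal{O}_{Y_0(2)_S}$, hence by the projection formula $p^\ast$ is injective on Picard groups. I will take $S$ connected, so the weight of the $\mu_2$-action on a line bundle is a well-defined element of $\mathbb{Z}/(2)$.

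For injectivity, suppose $\lambda^{\otimes n} \otimes p^\ast \cL \cong \mathcal{O}$ with $0 \le n \le 3$. Restricting to $P_0$ and recording the $\mu_4$-action gives a character in $\mathbb{Z}/(4)$: since $\mu_4$ acts on $\omega$ by $u \mapsto u^{-1}$, the bundle $\lambda$ has weight $-1$, a generator, while $p^\ast \cL$ has weight $0$. Triviality forces $-n \equiv 0 \pmod 4$, so $n = 0$; then $p^\ast \cL \cong \mathcal{O}$, and injectivity of $p^\ast$ gives $\cL \cong \mathcal{O}$.

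For surjectivity, take any $\cN \in \mathrm{Pic}(\scr Y_0(2)_S)$. Its restriction to $P_0$ has some $\mu_4$-weight $b \in \mathbb{Z}/(4)$, and since $\lambda$ has weight $-1$ there I can choose $n$ so that $\cN \otimes \lambda^{\otimes(-n)}$ has $\mu_4$-weight $0$ at $P_0$. Because the inclusion $\mu_2 \subset \mu_4$ induces the reduction $\mathbb{Z}/(4) \to \mathbb{Z}/(2)$ on characters, this bundle then has trivial $\mu_2$-weight at $P_0$; as the generic $\mu_2$-weight is locally constant on the connected stack, it vanishes identically. Thus $\cN \otimes \lambda^{\otimes(-n)}$ has trivial stabilizer action at every point, the only nontrivial stabilizers being the generic $\mu_2$ and the $\mu_4$ at $P_0$. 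For a tame coarse moduli morphism, a line bundle with everywhere-trivial stabilizer action is the pullback of its pushforward, so $\cN \otimes \lambda^{\otimes(-n)} \cong p^\ast \cL$ with $\cL := p_\ast\!\left(\cN \otimes \lambda^{\otimes(-n)}\right)$ invertible, whence $\cN \cong \lambda^{\otimes n} \otimes p^\ast \cL$ lies in the image.

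The steps I expect to demand the most care are the descent statement invoked for surjectivity and the verification that $P_0$ is the \emph{only} stacky point, with stabilizer exactly $\mu_4$. A concrete alternative, directly parallel to Fulton--Olsson, sidesteps the descent lemma: present $\scr Y_0(2)_S \cong [U/\mathbb{G}_m]$, where $U \subset \mathbb{A}^2_S$ has coordinates $(g_2,x_0)$ of weights $(4,2)$ and is the locus where $\Delta = (g_2 - 3x_0^2)(g_2 - 12 x_0^2)^2$ is invertible, so that $\mathrm{Pic}(\scr Y_0(2)_S) = \mathrm{Pic}^{\mathbb{G}_m}(U)$. One computes $\mathrm{Pic}(U) \cong \mathrm{Pic}(S)$ and finds $\mathcal{O}(U)^\times/\mathcal{O}(S)^\times$ free on the two weight-$4$ functions $g_2 - 3x_0^2$ and $g_2 - 12x_0^2$; the equivariant Picard sequence then returns characters modulo weight-$4$ units, i.e.\ the summand $\mathbb{Z}/(4)$ generated by $\lambda$, exactly as the weight-$12$ discriminant yields $\mathbb{Z}/(12)$ for $\cM_{1,1}$. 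Matching the complementary summand with $\mathrm{Pic}(Y_0(2)_S) \cong \mathrm{Pic}(S)$ under $p^\ast$ recovers the stated isomorphism; in this route the principal obstacle is instead justifying the quotient presentation and carrying out the unit computation on $U$.
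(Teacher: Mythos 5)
Your main argument is essentially the paper's proof: both rest on identifying the unique $\mu_4$-point $(y^2=x^3-x,\ \langle(0,0)\rangle)$, using the character of the stabilizer action there to split off $\mathbb{Z}/(4)$ (generated by $\lambda$, whose fourth power is trivialized by Corollary~\ref{lambda4triv}), and descending character-trivial bundles through the coarse map via the $\mu_2$-gerbe structure and the Fulton--Olsson criterion. The $[U/\mathbb{G}_m]$ presentation you sketch at the end is a genuinely different (and unexecuted) route, closer to Fulton--Olsson's original computation for $\mathcal{M}_{1,1}$; the paper does not take it.
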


Note that the reason the group homomorphism in the theorem is well-defined is the canonical trivialization of $\lambda^{\otimes 4}$ constructed in Corollary \ref{lambda4triv}.

\begin{proof}
Let $k$ be an algebraically closed field of characteristic $\neq 2,3$, and let $E/k$ be an elliptic curve equipped with a $[\Gamma_0(2)]$-structure $G$. Then $\underline{\mathrm{Aut}}(E,G) = \underline{\mathrm{Aut}}(E) \cong \mu_2$ unless $E$ has $j$-invariant $0$ or $1728$. 

If $j(E) = 0$, then $E$ is isomorphic to the elliptic curve $y^2 = x^3 + 1$ and $\underline{\mathrm{Aut}}(E) \cong \mu_6$. If $G = \langle (x_0,0) \rangle$ and $\eta \in \mu_6$ is a generator, then $E[2] = \{ \infty, (x_0,0), (\eta^2 x_0,0), (\eta^4 x_0,0) \}$, and under the action of $\mu_6$ we have $\eta \ast (x,y) = (\eta^2 x, \eta^3 y)$, so $\eta \ast (x_0,0) = (\eta^2 x_0,0)$. In particular, the $[\Gamma_0(2)]$-structure $G$ is only preserved by $\mu_2 \subset \mu_6$, so $\underline{\mathrm{Aut}}(E,G) = \mu_2$.

If $j(E) = 1728$, then $E$ is isomorphic to the elliptic curve $y^2 = x^3 - x$ and $\underline{\mathrm{Aut}}(E) \cong \mu_4$. We have $E[2] = \{ \infty, (0,0), (-1,0), (1,0) \}$. If $i \in \mu_4$ is a generator, then $i \ast (x,y) = (-x,iy)$, so $i \ast (0,0) = (0,0)$ but $i \ast (-1,0) = (1,0)$. So the $[\Gamma_0(2)]$-structure $G = \langle (0,0) \rangle$ is fixed by all of $\mu_4$, hence $\underline{\mathrm{Aut}}(E,G) \cong \mu_4$; but the other two $[\Gamma_0(2)]$-structures $G_1 = \langle (-1,0) \rangle$ and $G_2 = \langle (1,0) \rangle$ are only preserved by $\mu_2 \subset \mu_4$.

We conclude that the coarse space morphism $p:\scr Y_0(2)_S \rightarrow Y_0(2)_S$ is a $\mu_2$-gerbe over 
\begin{displaymath}
Y_0(2)_S \setminus p(E,G),
\end{displaymath}
where $E$ is the elliptic curve $y^2 = x^3 - x$ and $G = \langle (0,0) \rangle$.

Let $\tilde{s}: S \rightarrow \scr Y_0(2)_S$ be the morphism corresponding to $(E,G)$, where $E$ is the elliptic curve $y^2 = x^3 - x$ and $G$ is the $[\Gamma_0(2)]$-structure $\langle (0,0) \rangle$. Since $\underline{\mathrm{Aut}}(E,G) \cong \mu_4$, this defines a closed immersion $s: \cB \mu_{4,S} \hookrightarrow \scr Y_0(2)_S$. For any line bundle $\cL$ on $\scr Y_0(2)_S$, $s^\ast \cL$ is a line bundle on $\cB \mu_{4,S}$, corresponding to a line bundle $L$ on $S$ equipped with an action of $\mu_4$. This means we have a map $\rho: \mu_4 \rightarrow \underline{\mathrm{Aut}}(L) \cong \mathbb{G}_m$, which corresponds to a character $\chi \in \mathbb{Z}/(4)$.

This defines a homomorphism 
\begin{align*}
\mathrm{Pic}(\scr Y_0(2)_S) & \rightarrow \mathbb{Z}/(4) \\
\cL & \mapsto \chi .
\end{align*}
Let $K$ denote the kernel. 

The pullback $p^\ast: \mathrm{Pic}(Y_0(2)_S) \rightarrow \mathrm{Pic}(\scr Y_0(2)_S)$ is injective and lands in $K$. We claim that 
\begin{displaymath}
p^\ast: \mathrm{Pic}(Y_0(2)_S) \rightarrow K 
\end{displaymath}
is in fact an isomorphism. By \cite[2.3]{FO}, it therefore suffices to show that for every geometric point $\overline{x} \rightarrow \scr Y_0(2)_S$ and every $\cL \in K$, the action of the stabilizer group of $\overline{x}$ on $\cL(\overline{x})$ is trivial; for then $p_\ast \cL$ is an invertible sheaf on $Y_0(2)_S$ and $p^\ast p_\ast \cL \rightarrow \cL$ is an isomorphism.

Indeed, if $\cL \in K$ then the action $\rho$ defined above is trivial. By \cite[$\textrm{proof of $2.2$}$]{FO}, $\rho|_{\mu_2}$ equals the action of $\mu_2$ on the fiber of $\cL$ at the generic point of $\scr Y_0(2)$, hence the action of $\mu_2$ on the fiber of $\cL$ at the generic point is trivial. Therefore the action is trivial at every point since $\scr Y_0(2)_S$ is a $\mu_2$-gerbe over $Y_0(2)_S \setminus p(E,G)$, for $E$ the elliptic curve $y^2 = x^3 - x$ and $G = \langle (0,0) \rangle$ as discussed above.

Therefore $p^\ast: \mathrm{Pic}(Y_0(2)_S) \rightarrow K$ is an isomorphism. Furthermore, the map 
\begin{displaymath}
\mathrm{Pic}(\scr Y_0(2)_S) \rightarrow \mathbb{Z}/(4) 
\end{displaymath}
is surjective, by the same argument as \cite[2.5]{FO}. Indeed, the image of $\lambda$ in $\mathbb{Z}/(4)$ is a generator, because the action of any $\zeta \in \mu_4$ on the invariant differential $\omega$ of the elliptic curve $y^2 = x^3 - x$ is given by multiplication by $\zeta$.

Therefore we have a short exact sequence 
\begin{displaymath}
0 \rightarrow \mathrm{Pic}(Y_0(2)_S) \stackrel{p^\ast}{\rightarrow} \mathrm{Pic}(\scr Y_0(2)_S) \rightarrow \mathbb{Z}/(4) \rightarrow 0.
\end{displaymath}
The homomorphism 
\begin{align*}
\mathbb{Z}/(4) & \rightarrow \mathrm{Pic}(\scr Y_0(2)_S) \\
n & \mapsto \lambda^{\otimes n} 
\end{align*}
(well-defined due to the canonical trivialization of $\lambda^{\otimes 4}$ provided by Corollary \ref{lambda4triv}) provides a canonical splitting.
\end{proof}

\section{Picard group of $\scr Y_0(3)$}

Let $S$ be a scheme on which $6$ is invertible, and consider the stack $\scr Y_0(3)_S$. Let 
\begin{displaymath}
f: (\scr E, G) \rightarrow \scr Y_0(3)_S 
\end{displaymath}
be the universal elliptic curve with $[\Gamma_0(3)]$-structure, and let 
\begin{displaymath}
\lambda := f_\ast \Omega^1_{\scr E/\scr Y_0(3)_S} 
\end{displaymath}
be the Hodge bundle on $\scr Y_0(3)_S$. 

An object of $\scr Y_0(3)_S$ over an affine $S$-scheme $T = \mathrm{Spec}(R)$ consists of a pair $(E,G)$, where $E/T$ is an elliptic curve and $G \subseteq E(T)$ is a $[\Gamma_0(3)]$-structure on $E$, i.e.~a cyclic subgroup of $E(T)$ of order $3$. Write 
\begin{displaymath}
t: T \rightarrow \scr Y_0(3)_S 
\end{displaymath}
for the morphism corresponding to $(E,G)$. 

Suppose the elliptic curve $E$ is given by an equation
\begin{displaymath}
y^2 = 4x^3 - g_2 x - g_3,
\end{displaymath}
where $g_2,g_3 \in R$. (As before, since $6$ is invertible, any elliptic curve may be given by such an equation locally on the base.) As before, any change of coordinates may be given by $x \mapsto u^2 x$, $y \mapsto u^3 y$ for some unit $u \in R^\times$. We again have the invariant differential $\omega = \frac{dx}{y}$ (a section of $t^\ast \lambda$), and under such a change of coordinates $\omega \mapsto u^{-1} \omega$. 

The group law on $E$ tells us that $G = \{ \infty, (x_0, \pm y_0) \}$ for some $x_0,y_0 \in R$. Furthermore, $y_0$ must be a unit in $R$; otherwise, modulo some prime ideal of $R$ we would have $G = \{ \infty, (x_0,0) \}$, which is impossible since $G$ must remain a cyclic order-$3$ subgroup modulo any such reduction.

In particular, the unit $(y_0)^2 = (-y_0)^2 \in R$ is canonically determined (in our coordinates) by the choice of $[\Gamma_0(3)]$-structure $G$ on $E$. Under a change of coordinates given by $x \mapsto u^2 x$ and $y \mapsto u^3 y$, we have $(y_0)^2 \mapsto u^6 (y_0)^2$. In particular, $(y_0)^2 \omega^{\otimes 6}$ is independent of the choice of coordinates, so it provides a canonical trivialization of $t^\ast \lambda^{\otimes 6}$. We have proven:
\begin{corollary}\label{lambda6triv}
$(y_0)^2 \omega^{\otimes 6}$ defines a canonical trivialization of the line bundle $\lambda^{\otimes 6}$ on $\scr Y_0(3)_S$.
\end{corollary}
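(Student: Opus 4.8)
The plan is to argue exactly as in the proof of Corollary \ref{lambda4triv} for $\scr Y_0(2)_S$: to trivialize $\lambda^{\otimes 6}$ it suffices to produce, for each object of $\scr Y_0(3)_S$ presented in Weierstrass coordinates, a nowhere-vanishing section of $t^\ast\lambda^{\otimes 6}$ that is independent of the chosen presentation, and then to invoke descent so that these local sections glue to a global trivialization. First I would fix an affine $S$-scheme $T = \mathrm{Spec}(R)$ and a morphism $t : T \to \scr Y_0(3)_S$ classifying a pair $(E,G)$, choose an equation $y^2 = 4x^3 - g_2 x - g_3$ for $E$, and write $G = \{\infty, (x_0, \pm y_0)\}$. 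Since $\omega$ generates $t^\ast\lambda$ and $(y_0)^2 \in R^\times$ (because $y_0$ is a unit, as noted above), the section $(y_0)^2\omega^{\otimes 6}$ is a generator of $t^\ast\lambda^{\otimes 6}$.

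The core of the proof is the coordinate-independence computation, which is the one recorded immediately before the statement. Under a change of coordinates $x \mapsto u^2 x$, $y \mapsto u^3 y$ with $u \in R^\times$, the $y$-coordinate of the point transforms by $y_0 \mapsto u^3 y_0$, so $(y_0)^2 \mapsto u^6 (y_0)^2$; at the same time $\omega \mapsto u^{-1}\omega$ gives $\omega^{\otimes 6} \mapsto u^{-6}\omega^{\otimes 6}$. The weights cancel and $(y_0)^2\omega^{\otimes 6}$ is fixed, so the generator of $t^\ast\lambda^{\otimes 6}$ just constructed does not depend on the presentation of $(E,G)$.

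Finally I would assemble these into a global statement. Because $6$ is invertible on $S$, every object of $\scr Y_0(3)_S$ admits a presentation of the above form locally on the base, and any two such presentations differ by a transformation $x \mapsto u^2 x$, $y \mapsto u^3 y$; by the previous paragraph the assigned local section is unchanged under the resulting transition isomorphisms, so by descent the sections patch to a single global section of $\lambda^{\otimes 6}$ on $\scr Y_0(3)_S$. As this section generates the line bundle in each chart, it is a trivialization. I expect no serious obstacle here: the only delicate points are the weight bookkeeping ($+6$ from $(y_0)^2$ against $-6$ from $\omega^{\otimes 6}$) and the fact that $y_0$ is forced to be a unit, which is precisely what makes the section nowhere vanishing; both of these are already in hand from the discussion preceding the statement.
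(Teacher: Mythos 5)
Your proposal is correct and follows essentially the same argument as the paper: the unit $(y_0)^2$ determined by $G$ has weight $+6$ under the coordinate change $x \mapsto u^2x$, $y \mapsto u^3y$, cancelling the weight $-6$ of $\omega^{\otimes 6}$, so the local generators are coordinate-independent and glue to a global trivialization. No gaps to report.
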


We are now ready to prove the main result of this section. Let $S$ be a scheme on which $6$ is invertible, and consider the stack $\scr Y_0(3)_S$. Let $p: \scr Y_0(3)_S \rightarrow Y_0(3)_S$ be the coarse space morphism. 
\begin{theorem}
Under the above assumptions, the homomorphism 
\begin{align*}
\mathbb{Z}/(6) \times \mathrm{Pic}(Y_0(3)_S) & \rightarrow \mathrm{Pic}( \scr Y_0(3)_S ) \\
(n, \cL) & \mapsto \lambda^{\otimes n} \otimes p^\ast \cL 
\end{align*}
is an isomorphism.
\end{theorem}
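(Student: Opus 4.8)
The plan is to reproduce, with the appropriate numerical changes, the argument just carried out for $\scr Y_0(2)_S$. The first step is a local analysis of automorphism groups. Working over an algebraically closed field $k$ of characteristic $\neq 2,3$ and an elliptic curve $E/k$ with $[\Gamma_0(3)]$-structure $G$, I would show that $\underline{\mathrm{Aut}}(E,G) = \mu_2$ unless $j(E) \in \{0, 1728\}$, and then treat those two cases separately. For $j(E) = 1728$, modeled by $y^2 = x^3 - x$ with $\underline{\mathrm{Aut}}(E) \cong \mu_4$ acting by $i \ast (x,y) = (-x, iy)$, a cyclic order-$3$ subgroup $G = \{\infty, (x_0, \pm y_0)\}$ is preserved by $i$ only if $x_0 = 0$; but $(0,0)$ is $2$-torsion, so no order-$3$ structure is fixed by more than $\mu_2$. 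Thus, unlike the order-$2$ case, the point above $j = 1728$ is not special here.

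For $j(E) = 0$, modeled by $y^2 = x^3 + 1$ with $\underline{\mathrm{Aut}}(E) \cong \mu_6$ acting by $\eta \ast (x,y) = (\eta^2 x, \eta^3 y)$, I would use the $3$-division polynomial $3x(x^3+4)$ to locate the order-$3$ subgroups. The subgroup $G_0 = \langle (0,1) \rangle = \{\infty, (0,1), (0,-1)\}$ satisfies $\eta \ast (0,1) = (0, \eta^3) = (0,-1) \in G_0$, so it is preserved by all of $\mu_6$, giving $\underline{\mathrm{Aut}}(E, G_0) \cong \mu_6$; the remaining three order-$3$ subgroups (those with $x_0^3 = -4$) are permuted by $\mu_3 \subset \mu_6$ and preserved only by $\mu_2$. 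Consequently the coarse space morphism $p$ is a $\mu_2$-gerbe over $Y_0(3)_S \setminus p(E_0, G_0)$, where $(E_0, G_0)$ is this single exceptional pair with stabilizer $\mu_6$.

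With the gerbe structure established, the remainder follows the $\scr Y_0(2)_S$ template with $4$ replaced by $6$. The closed immersion $s : \cB \mu_{6,S} \hookrightarrow \scr Y_0(3)_S$ attached to $(E_0, G_0)$ produces, for each line bundle $\cL$, a character $\chi \in \mathbb{Z}/(6)$ recording the $\mu_6$-action on the fiber, hence a homomorphism $\mathrm{Pic}(\scr Y_0(3)_S) \to \mathbb{Z}/(6)$ with kernel $K$. I would then invoke \cite[2.3]{FO}: to see that $p^\ast : \mathrm{Pic}(Y_0(3)_S) \to K$ is an isomorphism it suffices to check that the stabilizer of each geometric point acts trivially on the fiber of each $\cL \in K$. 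For such $\cL$ the $\mu_6$-action at $(E_0,G_0)$ is trivial, so in particular $\rho|_{\mu_2}$ is trivial; by \cite[proof of 2.2]{FO} this is the generic $\mu_2$-action, and since $p$ is a $\mu_2$-gerbe away from $(E_0,G_0)$ the action is then trivial everywhere. For surjectivity onto $\mathbb{Z}/(6)$ I would show, as in \cite[2.5]{FO}, that $\lambda$ maps to a generator: the computation $\omega = dx/y \mapsto \eta^2 dx/(\eta^3 y) = \eta^{-1}\omega$ shows $\mu_6$ acts on $\omega$ through the faithful character $\eta \mapsto \eta^{-1}$, which generates $\mathbb{Z}/(6)$. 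This yields a short exact sequence $0 \rightarrow \mathrm{Pic}(Y_0(3)_S) \stackrel{p^\ast}{\rightarrow} \mathrm{Pic}(\scr Y_0(3)_S) \rightarrow \mathbb{Z}/(6) \rightarrow 0$, split canonically by $n \mapsto \lambda^{\otimes n}$, which is well-defined thanks to the trivialization of $\lambda^{\otimes 6}$ from Corollary \ref{lambda6triv}.

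The main obstacle is the automorphism bookkeeping at the special points, which differs qualitatively from the order-$2$ case: one must verify that exactly one order-$3$ structure at $j = 0$ is stabilized by the full $\mu_6$ (and not merely by $\mu_3$), that $j = 1728$ now contributes no extra automorphisms, and that $\lambda$ indeed hits a generator rather than a proper subgroup of $\mathbb{Z}/(6)$. Once these local facts are pinned down, the gerbe argument and the appeal to \cite[2.3]{FO} are formal, exactly as for $\scr Y_0(2)_S$.
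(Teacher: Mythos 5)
Your proposal is correct and follows essentially the same route as the paper: the same case analysis of automorphism groups at $j=0$ and $j=1728$, the same identification of the unique $\mu_6$-point $(y^2=x^3+1,\ \{\infty,(0,\pm 1)\})$, the same appeal to \cite[2.3]{FO} and the gerbe argument for the kernel, the same computation that $\mu_6$ acts on $\omega$ by $\eta^{-1}$ to get surjectivity, and the same splitting via Corollary \ref{lambda6triv}. The only (harmless) difference is that at $j=1728$ you rule out $\mu_4$-fixed order-$3$ structures by the quick observation that $i$-invariance would force $x_0=0$, hence $2$-torsion, rather than by explicitly computing the $x$-coordinates of the $3$-torsion points as the paper does.
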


\begin{proof}
Let $k$ be an algebraically closed field of characteristic $\neq 2,3$, and let $E/k$ be an elliptic curve equipped with a $[\Gamma_0(3)]$-structure $G$. Then $\underline{\mathrm{Aut}}(E,G) = \underline{\mathrm{Aut}}(E) \cong \mu_2$ unless $E$ has $j$-invariant $0$ or $1728$. 

If $j(E) = 0$, then $E$ is isomorphic to the elliptic curve $y^2 = x^3 + 1$ and $\underline{\mathrm{Aut}}(E) \cong \mu_6$. One may compute:
\begin{displaymath}
E[3] = \{ \infty, (0,\pm 1), (\sqrt[3]{-4}, \pm \sqrt{-3}), (\zeta \sqrt[3]{-4}, \pm \sqrt{-3}), (\zeta^2 \sqrt[3]{-4}, \pm \sqrt{-3} ) \},
\end{displaymath}
where $\zeta \in \mu_3^\times$. $\eta \in \mu_6$ acts on $E$ by $\eta \cdot (x,y) = (\eta^2 x,\eta^3 y)$, so we see that the $\mu_6$ action preserves the $[\Gamma_0(3)]$-structure $G = \{ \infty, (0,\pm 1) \}$, but every other $[\Gamma_0(3)]$-structure on $E$ is only preserved by $\mu_2 \subset \mu_6$.

If $j(E) = 1728$, then $E$ is isomorphic to the elliptic curve $y^2 = x^3 - x$ and $\underline{\mathrm{Aut}}(E) \cong \mu_4$. One may compute that the $x$-coordinate of any point of exact order $3$ on $E$ is of the form 
\begin{displaymath}
x_0 = \pm \sqrt{1 \pm \frac{\sqrt{3}}{3} }.
\end{displaymath}
For any fixed $x_0$ of this form, the action of a generator $i \in \mu_4$ is given by $i \cdot (x_0,y_0) = (-x_0,i y_0)$. In particular, none of the four $[\Gamma_0(3)]$-structures on $E$ are preserved by all of $\mu_4$. 

We conclude that the coarse space morphism $p:\scr Y_0(3)_S \rightarrow Y_0(3)_S$ is a $\mu_2$-gerbe over 
\begin{displaymath}
Y_0(3)_S \setminus p(E,G),
\end{displaymath}
where $E$ is the elliptic curve $y^2 = x^3 +1$ and $G = \{ \infty, (0,\pm 1) \}$. For this pair $(E,G)$ we have $\underline{\mathrm{Aut}}(E,G) \cong \mu_6$.

Let $\tilde{s}: S \rightarrow \scr Y_0(3)_S$ be the morphism corresponding to $(E,G)$, where $E$ is the elliptic curve $y^2 = x^3 + 1$ and $G$ is the $[\Gamma_0(3)]$-structure $\{ \infty, (0,\pm 1) \}$. Since $\underline{\mathrm{Aut}}(E,G) \cong \mu_6$, this defines a closed immersion $s: \cB \mu_{6,S} \hookrightarrow \scr Y_0(3)_S$. For any line bundle $\cL$ on $\scr Y_0(3)_S$, $s^\ast \cL$ is a line bundle on $\cB \mu_{6,S}$, corresponding to a line bundle $L$ on $S$ equipped with an action of $\mu_6$. This means we have a map $\rho: \mu_6 \rightarrow \underline{\mathrm{Aut}}(L) \cong \mathbb{G}_m$, which corresponds to a character $\chi \in \mathbb{Z}/(6)$.

This defines a homomorphism 
\begin{align*}
\mathrm{Pic}(\scr Y_0(3)_S) & \rightarrow \mathbb{Z}/(6) \\
\cL & \mapsto \chi .
\end{align*}
Let $K$ denote the kernel. 

The pullback $p^\ast: \mathrm{Pic}(Y_0(3)_S) \rightarrow \mathrm{Pic}(\scr Y_0(3)_S)$ is injective and lands in $K$. We claim that 
\begin{displaymath}
p^\ast: \mathrm{Pic}(Y_0(3)_S) \rightarrow K 
\end{displaymath}
is in fact an isomorphism. By \cite[2.3]{FO}, it therefore suffices to show that for every geometric point $\overline{x} \rightarrow \scr Y_0(3)_S$ and every $\cL \in K$, the action of the stabilizer group of $\overline{x}$ on $\cL(\overline{x})$ is trivial; for then $p_\ast \cL$ is an invertible sheaf on $Y_0(3)_S$ and $p^\ast p_\ast \cL \rightarrow \cL$ is an isomorphism.

Indeed, if $\cL \in K$ then the action $\rho$ defined above is trivial. By \cite[$\textrm{proof of $2.2$}$]{FO}, $\rho|_{\mu_2}$ equals the action of $\mu_2$ on the fiber of $\cL$ at the generic point of $\scr Y_0(3)$, hence the action of $\mu_2$ on the fiber of $\cL$ at the generic point is trivial. Therefore the action is trivial at every point since $\scr Y_0(3)_S$ is a $\mu_2$-gerbe over $Y_0(3)_S \setminus p(E,G)$, for $E$ the elliptic curve $y^2 = x^3 + 1$ and $G = \{ \infty, (0,\pm 1) \}$ as discussed above.

Therefore $p^\ast: \mathrm{Pic}(Y_0(3)_S) \rightarrow K$ is an isomorphism. Furthermore, the map 
\begin{displaymath}
\mathrm{Pic}(\scr Y_0(3)_S) \rightarrow \mathbb{Z}/(6) 
\end{displaymath}
is surjective. Indeed, the action of $\eta \in \mu_6$ on the elliptic curve $y^2 = x^3 + 1$ is given by $\eta \cdot (x,y) = (\eta^2 x,\eta^3 y)$, so its action on the invariant differential $\omega = \frac{dx}{y}$ is equal to multiplication by $\eta^{-1}$. So the image of $\lambda$ in $\mathbb{Z}/(6)$ is a generator.

Therefore we have a short exact sequence 
\begin{displaymath}
0 \rightarrow \mathrm{Pic}(Y_0(3)_S) \stackrel{p^\ast}{\rightarrow} \mathrm{Pic}(\scr Y_0(3)_S) \rightarrow \mathbb{Z}/(6) \rightarrow 0.
\end{displaymath}
The homomorphism 
\begin{align*}
\mathbb{Z}/(6) & \rightarrow \mathrm{Pic}(\scr Y_0(3)_S) \\
n & \mapsto \lambda^{\otimes n} 
\end{align*}
(well-defined due to the canonical trivialization of $\lambda^{\otimes 6}$ provided by Corollary \ref{lambda6triv}) provides a canonical splitting.
\end{proof}

\end{document}